\theoremstyle{definition}
\newtheorem{definition}{Definition}
\newtheorem{proposition}[definition]{Proposition}
\theoremstyle{remark}
\newtheorem{remark}[definition]{Remark}
\newcommand{\R}{\mathbb{R}}
\newcommand{\Z}{\mathbb{Z}}
\newcommand{\C}{\mathbb{C}}
\title{Geometric phase and holonomy in the space of 2-by-2 symmetric operators}
\author{Jakub Rondomanski, José D. Cojal González, Jürgen P. Rabe,\\ Carlos-Andres Palma, Konrad Polthier 
\\ \\
\small \href{jakub.rondomanski@fu-berlin.de}{jakub.rondomanski@fu-berlin.de}, \href{konrad.polthier@fu-berlin.de}{konrad.polthier@fu-berlin.de}, 
\href{palma@iphy.ac.cn}{palma@iphy.ac.cn}
}
\date{December 18, 2023}
\begin{document}
\bibliographystyle{alpha}
\maketitle
\begin{abstract}
We present a non-trivial metric tensor field on the space of 2-by-2 real-valued, symmetric matrices whose Levi-Civita connection renders frames of eigenvectors parallel. This results in fundamental reimagining of the space of symmetric matrices as a curved manifold (rather than a flat vector space) and reduces the computation of eigenvectors of one-parameter-families of matrices to a single computation of eigenvectors at an initial point, while the rest are obtained by the parallel transport ODE. Our work has important applications to vibrations of physical systems whose topology is directly explained by the non-trivial holonomy of the spaces of symmetric matrices.
\end{abstract}
\tableofcontents
\section{Introduction}
The space of real-valued, symmetric $n$-by-$n$ matrices is a vector subspace of the space of all real-valued $n$-by-$n$ matrices, which is itself isomorphic to the Euclidean space $\R^{n^2}$. The symmetric matrices thus inherit a Euclidean topology. It is also customary to consider them geometrically trivial: as a vector space, one typically associates a norm to it, be it the Frobenius norm, the operator norm, or any other. Indeed, the operator norm is often considered a “natural norm” of matrix spaces. We propose a different natural geometric structure specifically for the spaces of symmetric matrices by focusing on a fundamental property: the existence of orthogonal frames of eigenvectors. We construct a metric tensor on the tangent bundle of the space of symmetric matrices so that the frame fields consisting of unit-length eigenvectors become parallel with respect to the associated Levi-Civita connection. This metric is not completely flat: the singular subspaces consisting of matrices with repeating eigenvalues obtain infinite curvature and lead to non-trivial holonomy. Thus, we propose a reimagination of the space of symmetric matrices from a globally flat vector space to a curved, conical space, based on the natural variation of eigenvectors. 

The non-trivial holonomy impacts the behaviour of various physical systems. Fields such as geometry processing, molecular dynamics, or solid-state physics, can have large parts of their data represented by a single symmetric matrix. In mechanical systems it describes the second order dynamics of the system around an equilibrium. The eigenstructure (eigenvectors and eigenvalues) describes the directions and the frequencies of resonant vibrations. However, the dynamical matrix is also of interest in the study of statical models, as the eigenstructure contains information about the underlying geometry.

In total, the dynamical matrix depends on three structures of the underlying model: the discrete/combinatorial structure which describes how the points, edges and potentially higher-dimensional cells are connected, the geometric structure which describes the geometric position of the cells in space, and the physical structure which consists of material properties, for example the masses of points and the spring constants of the edges. It is central to the properties of the eigenstructure of a system that it depends on all three of the structures.

In most real-life scenarios, the system and its dynamical matrix are not constant but depend on one or multiple parameters. In particular the physical properties should be considered as functions of time, in general. It is therefore essential to be able to compute eigenstructures of parameter-dependent matrices. We present a general, mathematically rigorous framework for large-scale variations of eigenvectors, which complements results from perturbation theory dealing with infinitesimal variations. Our work is a steppingstone in the tabulation of Riemannian connections and metrics of $n$-by-$n$ matrices with important implications in eigenproblems and topological physics.

The work is inspired by insights in various physical phenomena in which a direction depends on the configuration parameters, but is not its function, meaning that the direction can be different even if the underlying parameters are equal. A classic example is Foucault’s pendulum. It was Berry \cite{Berry} who collected such phenomena and interpreted them as an “anholonomy” due to parallel transport with respect to a non-trivial connection on an abstract underlying space of parameters. His work inspired entire fields of physics, in particular ones under the umbrella term “topological physics”, which notably won the 2016 Nobel Prize in Physics and has inspired a wealth of new research. 

We complement this body of research by providing explicit, analytic formulae for the Berry connection in the case of 2-by-2 real-valued, symmetric matrices and by reinterpreting it as a Levi-Civita connection of a metric tensor, for which we also provide explicit formulae.
Hence, in our approach, the Berry connection is intrinsically defined from the metric and does not depend on an embedding in a flat space. The standard formulae for the Berry connection $\omega$ and Berry curvature $d\omega$ of a 2-dimensional manifold $M \subset \R^3 \subset \C^3$ embedded in the flat space $\R^3$ with respect to a local orthonormal frame $(e_1, e_2)$ on $TM$ are:
\begin{align}
	\omega_i &= \langle e_1, \partial_i e_2 \rangle \\
	(d\omega)_{ij} &= \langle \partial_i e_1, \partial_j e_2 \rangle - \langle \partial_j e_1, \partial_i e_2 \rangle
\end{align}
or equivalently in the complexified form for $n \coloneqq \frac{1}{\sqrt{2}}\left(e_1 +ie_2\right)$:
\begin{align}
	\omega_i &= Im\langle n, \partial_i n \rangle \\
	(d\omega)_{ij} &= Im(\langle \partial_i n, \partial_j n \rangle - \langle \partial_j n, \partial_i n \rangle)
\end{align}
where $\langle .,. \rangle$ is the standard Euclidean/Hermitian product on $\R^3$ or $\C^3$, respectively. The differential $\partial_i e_2$ corresponds to the flat connection on $\R^3$; in particular, it has a normal component:
\begin{align}
	\partial_i e_2 = \langle \partial_i e_2, e_1 \rangle e_1 + II(e_1, \partial_i) \nu
\end{align}
where $\nu$ is the unit normal on $M$ and $II$ is its second fundamental form. 
This normal component vanishes in (1) but not in (2).
We propose instead an intrinsic formulation:
\begin{align}
	\omega_i &= g( e_1, \nabla_i e_2 ) \\
	(d\omega)_{ij} &= -R(\partial_i, \partial_j, e_1, e_2) = -g(\nabla_i\nabla_je_1-\nabla_j\nabla_ie_1, e_2)
\end{align}
where $g$ is the pullback-metric due to the embedding $M \subset \R^3$, $\nabla$ its Levi-Civita connection and $R$ the (purely covariant) Riemann curvature tensor.
See Appendix for a derivation of both formulations and their equivalence.
\section{Symmetric matrices with 0 trace}
\begin{definition}
Let 
\begin{align}
	\Sigma \coloneqq \left\{ \begin{pmatrix}
		x & y \\ y & -x
	\end{pmatrix}
	: x,y \in \R \right\} \cong \R^2
\end{align}
be the space of 2-by-2 real-valued symmetric matrices with no trace.
This is a 2-dimensional vector space and a smooth manifold with a global cartesian coordinate chart
\begin{align}
	\Phi : \Sigma &\to \R^2 \\
	\begin{pmatrix}
		x & y \\ y & -x
	\end{pmatrix}
	&\mapsto (x, y)
\end{align}

We will also use the polar coordinates $(r, \phi)$ given by 
\begin{align}
	(x, y) = (r\cos(\phi), r\sin(\phi)).
\end{align}
The partial derivatives of cartesian coordinates are the standard unit vector fields
\begin{align}
	\partial_x = \partial_1 = \begin{pmatrix} 1\\0 \end{pmatrix}, \qquad \partial_y = \partial_2 = \begin{pmatrix} 0\\1 \end{pmatrix}.
\end{align}
The partial derivatives of polar coordinates are 
\begin{align}
	\partial_r &= \cos(\phi)\partial_x + \sin(\phi)\partial_y \\
	\partial_\phi &= -r\sin(\phi)\partial_x + r\cos(\phi) \partial_y.
\end{align}
\end{definition}
\begin{definition}[Metric]
	Let\begin{align}
		f : \Sigma &\to \R^3 \\
		(x,y) &\mapsto \left(x, y, \sqrt{3} \sqrt{x^2+y^2}\right) = \left(x, y, \sqrt{3}r\right) 
	\end{align}
be the parametrization of a cone.
It is smooth everywhere apart from the origin.
Its Jacobian is:
\begin{align}
	Df = \begin{pmatrix}
		1&0\\
		0&1 \\
		\sqrt{3}x/r & \sqrt{3}y/r
	\end{pmatrix}
\end{align}
and the resulting metric is:
\begin{align}
	g = Df^\top Df = \begin{pmatrix}
		1+3x^2/r^2 & 3xy/r^2 \\
		3xy/r^2 & 1+3y^2/r^2
	\end{pmatrix}
\end{align}
The matrix $g$ contains the scalar products $g_{ij} = g(\partial_i, \partial_j)$.
Given a smooth curve $\gamma : [0, 1] \to \Sigma\setminus \{0\}$ with velocity field $\gamma'(t) = \gamma'^1(t) \partial_1 + \gamma'^2(t) \partial_2$, the length of $\gamma$ (with respect to $g$) is:
\begin{align}
	L(\gamma) = \int_0^1 \sqrt{g_{ij}\gamma'^i \gamma'^j} dt
\end{align}
\end{definition}
\begin{definition}[Frames]
	We seek a reference orthonormal frame to define the connection and curvature forms.
	The cartesian coordinate frame $(\partial_1, \partial_2)$ is not orthonormal in the metric $g$. 
	Indeed, $g(\partial_i, \partial_j) = g_{ij} \neq \delta_{ij}$.
	However, the polar coordinate vectors are orthogonal:
	\begin{align}
		g(\partial_r, \partial_\phi) &= 0 \\
		g(\partial_r, \partial_r) &= 4 \\
		g(\partial_\phi, \partial_\phi) &= r^2.
	\end{align}
	In other words, the metric in polar coordinates is diagonal:
	\begin{align} \label{polar-metric}
		g_{pol} = 
		\begin{pmatrix}
			g(\partial_r, \partial_r) & g(\partial_r, \partial_\phi) \\
			g(\partial_r, \partial_\phi) & g(\partial_\phi, \partial_\phi)
		\end{pmatrix}
		=
		\begin{pmatrix}
			4 & 0 \\
			0 & r^2
		\end{pmatrix}
	\end{align}
	Thus, the normalized polar coordinate vectors $\left( \frac{\partial_r}{2}, \frac{\partial_\phi}{r}\right)$ are an orthonormal frame on $\Sigma\setminus \{0\}$.	
	Ideally, the reference frame would be as ``constant'' as possible. 
	The polar coordinate frame however is rotating around the origin.
	We can rotate it backwards and thus define the reference orthonormal frame $(e_1, e_2)$ as:
	\begin{align}
		e_1 &\coloneqq \cos(\phi)\frac{\partial_r}{2} - \sin(\phi)\frac{\partial_\phi}{r} \\
			&= \left( \frac{\cos^2(\phi)}{2}+\sin^2(\phi)\right)\partial_x - \frac{\cos(\phi)\sin(\phi)}{2}\partial_y  \\
		e_2 &\coloneqq \sin(\phi)\frac{\partial_r}{2} + \cos(\phi)\frac{\partial_\phi}{r} \\
			& = -\frac{\cos(\phi)\sin(\phi)}{2}\partial_x + \left(\frac{\sin^2(\phi)}{2}+\cos^2(\phi)\right)\partial_y
	\end{align}
\end{definition}
\begin{definition}[Connection, curvature]
	Let $\nabla$ be the Levi-Civita connection of $g$ and $R$ its curvature tensor.
	Using the orthonormal frame $(e_1, e_2)$ we can fully describe them by the connection 1-form
	\begin{align}
		\omega : T\Sigma &\to \R \\
		X &\mapsto \omega(X) \coloneqq g(e_1, \nabla_X e_2) = -g(\nabla_X e_1, e_2)
	\end{align}
	and the curvature 2-form
	\begin{align}
		d\omega : T\Sigma \times T\Sigma &\to \R \\
		(X,Y) &\mapsto d\omega(X, Y) = R(X, Y, e_2, e_1)
	\end{align}
(see Appendix \ref{app:conn} for more details).
\end{definition}
\begin{proposition}
	Let $\omega$, $d\omega$ be the connection and curvature forms of $g$ with respect to the orthonormal frame $(e_1, e_2)$. Then:
	\begin{enumerate}
		\item $\omega = \frac{1}{2r^2} g(\partial_\phi, \_) = \frac{1}{2}d\phi $ 
		\item $d\omega = \pi \delta(x,y) dA$ 
	\end{enumerate}
where $\delta(x,y)$ is the 2-dimensional delta function.
\end{proposition}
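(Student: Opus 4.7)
The plan is to compute the connection 1-form first in the polar orthonormal frame $(\tilde e_1, \tilde e_2) = (\partial_r/2, \partial_\phi/r)$, where the diagonal form \eqref{polar-metric} of $g$ makes the calculation routine, and then transfer to $(e_1, e_2)$ via the explicit rotation relating the two frames. The curvature 2-form will then essentially be $\tfrac{1}{2}\,d(d\phi)$, which vanishes pointwise on $\Sigma\setminus\{0\}$ but carries a concentrated distributional mass at the origin equal to half the monodromy of $d\phi$, i.e.\ the deficit angle of the cone.

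\textbf{Part 1.} I would invoke Cartan's first structure equation $d\theta^a + \omega^a_b\wedge\theta^b = 0$ with the orthonormal coframe $\theta^1 = 2\,dr$, $\theta^2 = r\,d\phi$ dual to $(\tilde e_1, \tilde e_2)$. Since $d\theta^1 = 0$ and $d\theta^2 = dr\wedge d\phi$, antisymmetry $\omega^2_1 = -\omega^1_2$ forces $\tilde\omega \coloneqq \tilde\omega^1_2 = -\tfrac{1}{2}\,d\phi$. The frame $(e_1,e_2)$ is obtained from $(\tilde e_1,\tilde e_2)$ by a rotation of angle $-\phi$ (read off directly from the definitions of $e_1, e_2$), so the transformation law $\omega = M^{-1}\tilde\omega M + M^{-1}dM$ reduces in 2D to $\omega^1_2 = \tilde\omega^1_2 - d\theta$, giving $\omega = -\tfrac{1}{2}d\phi + d\phi = \tfrac{1}{2}d\phi$. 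As a sanity check, expanding $\nabla_X e_2$ in the $\tilde e_i$ basis and substituting $\nabla_X\tilde e_1 = \tfrac{1}{2}\,d\phi(X)\,\tilde e_2$ and $\nabla_X\tilde e_2 = -\tfrac{1}{2}\,d\phi(X)\,\tilde e_1$ should collapse to $\nabla_X e_2 = \tfrac{1}{2}\,d\phi(X)\,e_1$, confirming $\omega(X) = \tfrac{1}{2}\,d\phi(X)$. The equivalent expression $\tfrac{1}{2r^2}g(\partial_\phi,\_)$ is then a one-line verification by evaluating both 1-forms on the basis $(\partial_r,\partial_\phi)$ using $g(\partial_\phi,\partial_\phi) = r^2$, $g(\partial_\phi,\partial_r) = 0$.

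\textbf{Part 2.} On $\Sigma\setminus\{0\}$ we have $d\omega = \tfrac{1}{2}\,d(d\phi) = 0$ pointwise, so $d\omega$ is supported at the origin. For any disk $D\ni 0$ with boundary loop $\gamma = \partial D$, Stokes' theorem together with the monodromy of $d\phi$ yields
\begin{equation*}
	\int_D d\omega = \oint_\gamma \omega = \tfrac{1}{2}\oint_\gamma d\phi = \tfrac{1}{2}\cdot 2\pi = \pi,
\end{equation*}
while the integral vanishes for any loop not enclosing $0$. Since the only distribution supported at a point which assigns mass $\pi$ to every disk around it is the Dirac mass of weight $\pi$, this forces $d\omega = \pi\,\delta(x,y)\,dA$ as a current. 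Geometrically, this is just the deficit angle of the cone $f(\Sigma)$: the latitude circle at polar coordinate $r$ has intrinsic circumference $2\pi r$ and intrinsic slant $2r$, so the cone opens through an angle of $\pi$ and has concentrated Gaussian curvature $2\pi - \pi = \pi$ at the apex, consistent with Gauss--Bonnet.

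The delicate points are (a) sign bookkeeping in the frame rotation --- the rotation from $(\tilde e_1, \tilde e_2)$ to $(e_1, e_2)$ is by $-\phi$ rather than $+\phi$, and the sign in $\omega^1_2 = \tilde\omega^1_2 - d\theta$ must be derived consistently from the chosen convention for $M^{-1}dM$; a careless sign here would give $\omega = -\tfrac{3}{2}d\phi$ instead of $\tfrac{1}{2}d\phi$ --- and (b) the distributional interpretation of $d\omega$ at a point where neither $\omega$ nor $d\phi$ extends smoothly, which is best justified by integrating against smooth compactly supported test 0-forms and using Stokes on the complement of a shrinking disk around the origin.
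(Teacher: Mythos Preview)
Your proof is correct, but your route to Part~1 differs from the paper's. The paper invokes the two-dimensional identity $\omega(X)=g([e_1,e_2],X)$ and computes the Lie bracket $[e_1,e_2]=\tfrac{1}{2r^2}\partial_\phi$ directly from the polar expressions for $e_1,e_2$, then reads off $\omega_r=0$, $\omega_\phi=\tfrac12$. You instead work first in the polar orthonormal frame $(\partial_r/2,\partial_\phi/r)$ via Cartan's first structure equation to obtain $\tilde\omega^1_{\;2}=-\tfrac12\,d\phi$, and then pass to $(e_1,e_2)$ using the rotation law $\omega^1_{\;2}=\tilde\omega^1_{\;2}-d\theta$ with $\theta=-\phi$. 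Both are valid; the paper's Lie-bracket shortcut is a one-step calculation specific to dimension two, whereas your Cartan-plus-gauge-transformation argument is the standard textbook pipeline and would scale cleanly to higher-dimensional frames. Your cautionary remark about the sign bookkeeping is well placed: a wrong sign in either $\tilde\omega^1_{\;2}$ or the rotation angle does produce $-\tfrac32\,d\phi$ instead of $\tfrac12\,d\phi$.

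For Part~2 the two arguments coincide in substance. The paper verifies $R=0$ on $\Sigma\setminus\{0\}$ by computing $\nabla_r\nabla_\phi e_1-\nabla_\phi\nabla_r e_1$ explicitly and then applies Stokes over shrinking disks; you obtain the same vanishing immediately from $d(\tfrac12\,d\phi)=0$ and then run the identical Stokes computation. Your added remark linking the value $\pi$ to the cone's deficit angle is a nice geometric sanity check that the paper does not make.
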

\begin{proof}
	\begin{enumerate}
		\item In a 2-dimensional space it is
		\begin{align}
			\omega(X) = g(e_1, \nabla_X e_2) = g([e_1, e_2], X)
		\end{align}
	where $[.,.]$ is the Lie-bracket of vector fields:
	\begin{align}
		[e_1, e_2] &= \partial_{e_1}e_2 - \partial_{e_2}e_1 \\
		&= \left(-\frac{\sin(\phi)\cos(\phi)}{2r} \partial_r  +\frac{2 \sin^2(\phi)-\cos^2(\phi)}{2r^2} \partial_\phi \right) \\
		&\quad - \left(-\frac{\sin(\phi)\cos(\phi)}{2r} \partial_r  +\frac{\sin^2(\phi)-2\cos^2(\phi)}{2r^2} \partial_\phi \right) \\
		&=\frac{1}{2r^2}\partial_\phi
	\end{align}
	Let $\omega_r, \omega_\phi: \Sigma \setminus \{0\} \to \R$ be component functions so that
	\begin{align}
		\omega = \omega_r dr + \omega_\phi d\phi
	\end{align}
	Then:
	\begin{align}
		\omega_r &= \omega(\partial_r) = 0 \\
		\omega_\phi &= \omega(\partial_\phi) = \frac{g(\partial_\phi, \partial_\phi)}{2r^2} = \frac{1}{2}
	\end{align}
	\item For $(x, y) \neq (0,0)$ we can compute in polar coordinates:
	\begin{align}
		\nabla_re_1 &= -\omega(\partial_r)e_2 = 0 \\
		\nabla_\phi e_1 &= -\omega(\partial_\phi)e_2 = -\frac{1}{2}e_2 \\
		\nabla_r \nabla_\phi e_1 &= -\frac{1}{2} \nabla_r e_2 = -\frac{1}{2} \omega(\partial_r) e_1 = 0 \\
		\nabla_\phi \nabla_r e_1 &= \nabla_\phi 0 = 0
	\end{align}
	from which it already follows that $R=0$.
	We can test for the value at $0$ by integrating over arbitrarily small circles.
	Let $r>0$ be any radius and define 
	\begin{align}
		\gamma_r : [0, 2\pi] &\to \Sigma \\
		t &\mapsto r(\cos(t), \sin(t)).
	\end{align}
	Let $U_r$ be the disc around 0 with radius $r$, i.e. $\partial U_r = im(\gamma_r)$.
	It follows by Stokes:
	\begin{align}
		\int_{U_r} d\omega &= \int_{im(\gamma_r)} \omega \\
		&= \int_0^{2\pi} \omega(\gamma_r'(t)) dt \\
		&= \int_0^{2\pi} \frac{1}{2r^2}g(\partial_\phi, -r\sin(t) \partial_x + r\cos(t) \partial_y) dt \\
		&= \int_0^{2\pi} \frac{1}{2r^2}g(\partial_\phi, \partial_\phi) dt \\
		&= \int_0^{2\pi} \frac{1}{2}dt \\
		&= \pi
	\end{align}
	which equals the value of $\int_{U_r} \pi \delta(x,y) dA$.
	\end{enumerate}
\end{proof}
\begin{proposition} \label{prop:berry}
	The connection $\nabla$ is the Berry connection in the following sense: Let 
	\begin{align}
		E: U \subset &\Sigma \to T\Sigma \\
		(x,y) &\mapsto E(x,y) = E^1(x,y) e_1 + E^2(x,y) e_2
	\end{align}
	be a local smooth vector field with component functions $E_1, E_2 : U \to \R$ so that for each $(x,y) \in U$ the vector $(E^1(x,y), E^2(x,y))^\top$ is a unit-length eigenvector of the matrix $\begin{pmatrix} x & y \\ y & -x \end{pmatrix}$.
	Then $\nabla E \equiv 0$.
\end{proposition}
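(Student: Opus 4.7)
My plan is to compute the components $E^1,E^2$ explicitly in polar coordinates, then use the formula $\omega=\frac{1}{2}d\phi$ from the previous proposition to verify $\nabla E=0$ by direct cancellation. The key observation is that the eigenvectors wind at half the rate of the polar angle, and this is precisely the rate at which the frame $(e_1,e_2)$ rotates under parallel transport.

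First I would write the matrix in polar coordinates as
\begin{align}
\begin{pmatrix} x & y \\ y & -x \end{pmatrix} = r\begin{pmatrix} \cos\phi & \sin\phi \\ \sin\phi & -\cos\phi \end{pmatrix},
\end{align}
recognize the second factor as a reflection (eigenvalues $\pm 1$), and solve $(M-rI)v=0$ using the half-angle identities $\cos\phi-1=-2\sin^2(\phi/2)$ and $\sin\phi=2\sin(\phi/2)\cos(\phi/2)$. This yields unit eigenvectors $\pm(\cos(\phi/2),\sin(\phi/2))^\top$ for eigenvalue $+r$ and $\pm(-\sin(\phi/2),\cos(\phi/2))^\top$ for eigenvalue $-r$. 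Since the two eigenvalues are distinct on $\Sigma\setminus\{0\}$, on any connected $U$ the field $E$ corresponds to a single choice of eigenvalue and sign; on a simply connected $U$ we may moreover pick a smooth branch of $\phi$, hence of $\phi/2$. It therefore suffices to verify the claim for the canonical choice $E=\cos(\phi/2)\,e_1+\sin(\phi/2)\,e_2$; the other three cases are identical up to signs.

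For the computation, I would use the standard frame identities forced by $g$-compatibility and the definition of $\omega$ in a 2-dimensional space, namely $\nabla_X e_1=-\omega(X)\,e_2$ and $\nabla_X e_2=\omega(X)\,e_1$, together with $\omega=\tfrac{1}{2}d\phi$. Expanding $\nabla_X E$ with the Leibniz rule gives four terms:
\begin{align}
\nabla_X E &= -\tfrac{1}{2}\sin(\phi/2)\,d\phi(X)\,e_1 + \cos(\phi/2)\cdot\bigl(-\tfrac{1}{2}d\phi(X)\,e_2\bigr) \\
&\quad + \tfrac{1}{2}\cos(\phi/2)\,d\phi(X)\,e_2 + \sin(\phi/2)\cdot\tfrac{1}{2}d\phi(X)\,e_1,
\end{align}
which cancel in pairs to give $0$.

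I do not anticipate any serious obstacle: the proof is essentially the half-angle identity combined with the formula for $\omega$. The only conceptual subtlety is that $\phi/2$ is only locally well-defined on $\Sigma\setminus\{0\}$, so the eigenvector field $E$ is inherently local (it flips sign under a $2\pi$ loop around the origin); this is consistent with the hypothesis that $E$ is defined only on some $U\subset\Sigma$, and in fact it is the source of the non-trivial holonomy established in the previous proposition.
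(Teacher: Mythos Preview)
Your proposal is correct and follows essentially the same approach as the paper: reduce to the canonical choice $E=\cos(\phi/2)\,e_1+\sin(\phi/2)\,e_2$, then use $\omega=\tfrac12\,d\phi$ together with $\nabla_X e_1=-\omega(X)e_2$, $\nabla_X e_2=\omega(X)e_1$ to see the four Leibniz terms cancel. The only cosmetic difference is that the paper verifies $\nabla_r E=0$ and $\nabla_\phi E=0$ separately (and defers the eigenvector computation to an appendix), whereas you treat a general direction $X$ at once and derive the half-angle eigenvectors inline.
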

\begin{proof}
	There are 4 unit-length eigenvectors at each point.
	It is enough to show the proposition for one smooth choice of $E$.
	The other choices, namely $-E$ and rotations by $\pm \pi/2$, follow automatically.
	Set 
	\begin{align}
		E \coloneqq \cos\left(\frac{\phi}{2}\right)e_1 + \sin\left(\frac{\phi}{2}\right)e_2
	\end{align}
	(see Appendix \ref{app:eig} for proof that it is an eigenvector).
	Then it is:
	\begin{align}
		\nabla_rE &= \cos(\frac{\phi}{2})\nabla_r e_1 + \sin(\frac{\phi}{2})\nabla_r e_2 \\
		&= -\cos(\frac{\phi}{2})\omega(\partial_r) e_2 + \sin(\frac{\phi}{2})\omega(\partial_r) e_1 \\
		&= 0 \\
		\nabla_\phi E &=  d_\phi (\cos(\frac{\phi}{2}))e_1 + \cos(\frac{\phi}{2}) \nabla_\phi e_1 +  d_\phi(\sin(\frac{\phi}{2}))e_2 + \sin(\frac{\phi}{2}) \nabla_\phi e_2 \\
		&= -\frac{1}{2} \sin(\frac{\phi}{2}) e_1 -\frac{1}{2} \cos(\frac{\phi}{2}) e_2 + \frac{1}{2} \cos(\frac{\phi}{2}) e_2 + \frac{1}{2} \sin(\frac{\phi}{2}) e_1 \\
		&= 0
	\end{align}
\end{proof}
It follows that the forms $\omega, d\omega$ represent the Berry connection and Berry curvature.
In particular, we can now define the geometric phase for any (not necessarily closed) curve.
\begin{definition}[Geometric phase]
	Let $\gamma : [0, 1] \to \Sigma$ be a smooth curve. 
	The geometric phase $\theta(\gamma)$ along $\gamma$ is the rotation angle due to parallel transport when measured in the reference frame $(e_1, e_2)$.
	It is given by:
	\begin{align}
		\theta(\gamma) \coloneqq \int_{im(\gamma)}\omega = \int_0^1 \omega(\gamma'(t))dt = \int_0^1 \frac{1}{2r^2}g(\partial_\phi, \gamma'(t)) dt
	\end{align}
\end{definition}
Note that while the parallely transported vector is itself well-defined, to measure an angle one needs to choose a reference frame.
An exception is when the curve is closed: $\gamma(0) = \gamma(1)$.
Then, the geometric phase is determined solely by the winding number of the curve.
\begin{definition}[Winding number]
	Let $\gamma : [0, 1] \to \Sigma$ be a smooth curve.
	Its winding number (around 0) is the following integral:
	\begin{align}
		W(\gamma; 0) = \frac{1}{2\pi}\int_{im(\gamma)}d\phi
	\end{align}
	If $\gamma$ never goes through 0, then its winding number is an integer.
	However, the winding number is well-defined also for smooth curves which do cross the origin.
	For example, a simple closed curve which crosses 0 once has the winding number of $1/2$.
	See $\cite{hungerbuehler}$ for a full discussion on non-integer winding numbers.	
\end{definition}
\begin{proposition}[Geometric phase of closed curves]
	Let $\gamma : [0, 1] \to \Sigma$ be a smooth curve with winding number $W(\gamma; 0)$.
	The geometric phase $\theta(\gamma)$ along $\gamma$ is:
	\begin{align}
		\theta(\gamma) = \pi \cdot W(\gamma; 0)
	\end{align}
\end{proposition}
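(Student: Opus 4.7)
The plan is to reduce the statement to a direct substitution using the identity $\omega = \tfrac{1}{2} d\phi$ established in the first proposition, combined with the definition of the winding number as $W(\gamma;0) = \tfrac{1}{2\pi}\int_{\mathrm{im}(\gamma)} d\phi$.

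First I would treat the generic case where $\gamma$ avoids the origin, so that the polar coordinate $\phi$ is defined (at least locally) along the entire image of $\gamma$ and $\omega$ is a smooth 1-form on a neighborhood of $\mathrm{im}(\gamma)$. In that setting, the chain of equalities
\begin{align}
\theta(\gamma) \;=\; \int_{\mathrm{im}(\gamma)} \omega \;=\; \int_{\mathrm{im}(\gamma)} \tfrac{1}{2}\, d\phi \;=\; \tfrac{1}{2} \cdot 2\pi\, W(\gamma;0) \;=\; \pi\, W(\gamma;0)
\end{align}
is immediate from the preceding proposition and the definition of $W(\gamma;0)$. I would emphasize that because the integrand is evaluated along $\gamma'(t)$, the multi-valued nature of $\phi$ is not an obstruction: $d\phi$ is a globally well-defined closed 1-form on $\Sigma\setminus\{0\}$.

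The main obstacle is the case in which $\gamma$ actually passes through the origin, where neither $\omega$ nor $d\phi$ is a priori defined, yet the proposition still asserts the identity (with $W(\gamma;0)$ allowed to take half-integer values, as alluded to in the preceding remark and in \cite{hungerbuehler}). My plan here is an approximation argument: take a family $\gamma_\epsilon$ of smooth curves that agree with $\gamma$ outside a small neighborhood of the origin and detour around $0$ along a small arc of radius $\epsilon$. Outside the detour the integrands agree exactly, so it suffices to control the contributions from the detours. On an arc of radius $\epsilon$ the 1-form $\omega = \tfrac{1}{2} d\phi$ contributes at most $\pi$ times the angular extent of the detour, and by the same token $d\phi$ contributes exactly $2\pi$ times that extent; taking $\epsilon \to 0$ and invoking the convention for $W(\gamma;0)$ from \cite{hungerbuehler} (each simple crossing of the origin contributes $\tfrac{1}{2}$ to the winding number) yields the same proportionality constant $\pi$ on both sides.

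Finally I would note that, as a consistency check, the result is equivalent to Stokes' theorem applied to a disc bounded by $\gamma$ together with the formula $d\omega = \pi \delta(x,y)\, dA$ from the first proposition: the integral of $d\omega$ over a region enclosed by a closed $\gamma$ equals $\pi$ times the number of times $0$ is covered, i.e.\ $\pi \cdot W(\gamma;0)$. This provides an alternative, distributional route to the same conclusion and handles the curve-through-origin case uniformly, at the cost of working with $\delta$-distributions rather than smooth differential forms.
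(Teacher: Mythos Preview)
Your proof is correct and its core is identical to the paper's: the paper's entire argument is the one-line chain $\theta(\gamma) = \int_{\mathrm{im}(\gamma)}\omega = \tfrac{1}{2}\int_{\mathrm{im}(\gamma)} d\phi = \pi \cdot W(\gamma;0)$. Your additional treatment of the origin-crossing case via $\epsilon$-detours and the Stokes consistency check go beyond what the paper actually provides, but they are sound elaborations rather than a genuinely different route.
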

\begin{proof}
	\begin{align}
		\theta(\gamma) = \int_{im(\gamma)}\omega = \frac{1}{2}\int_{im(\gamma)}d\phi = \pi \cdot W(\gamma; 0)
	\end{align}
\end{proof}
\begin{proposition}[Holonomy] \label{prop:holonomy}
	The holonomy groups of $\Sigma \setminus \{0\}$, respectively of $\Sigma$ are given by:
	\begin{enumerate}
		\item $Hol(\Sigma \setminus \{0\}) = \Z_2$
		\item $Hol(\Sigma) = \Z_4$
	\end{enumerate}
\end{proposition}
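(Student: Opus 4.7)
The plan is to combine the preceding proposition ($\theta(\gamma) = \pi \cdot W(\gamma; 0)$) with the standard fact that parallel transport along a smooth closed loop in an orientable $2$-dimensional Riemannian manifold is exactly rotation in $SO(2)$ by the geometric phase. The holonomy group at any base point is then identified with the subgroup of $SO(2)$ whose angles lie in the set $\{\pi W(\gamma; 0) \bmod 2\pi : \gamma \text{ a smooth loop at the base point}\}$, so the problem reduces entirely to determining which winding numbers around the origin are attainable by smooth loops in each of the two spaces.

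For part (1), every smooth loop in the punctured plane $\Sigma \setminus \{0\}$ has an integer winding number $W \in \Z$, so $\theta(\gamma) \in \pi \Z$. Reduction modulo $2\pi$ yields the two-element subgroup $\{0, \pi\} \cong \Z_2 \subset SO(2)$. Both elements are realized: any nullhomotopic loop contributes the identity, while a small circle $\gamma_r$ around the origin (already treated in the Stokes computation in the preceding proof) contributes exactly the rotation by $\pi$. Thus $Hol(\Sigma \setminus \{0\}) = \Z_2$.

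For part (2), admitting the origin allows smooth loops to cross it, which by the Hungerbuhler convention \cite{hungerbuehler} enlarges the set of attainable winding numbers to $\tfrac{1}{2}\Z$. A simple smooth loop crossing the origin once has $W = \pm 1/2$ and therefore realizes a rotation by $\pi/2$; concatenating such a loop with loops of integer winding number produces every half-integer winding, hence every angle in $\tfrac{\pi}{2}\Z$. Modulo $2\pi$, this is $\{0, \pi/2, \pi, 3\pi/2\} \cong \Z_4$, generated by a single $W = 1/2$ loop.

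The main obstacle is part (2): one must make rigorous sense of parallel transport along a curve that passes through the conical singularity at the origin, where the metric $g$ degenerates. I would define it as the limit of parallel transports along generic perturbations $\gamma_\varepsilon$ that avoid $0$, and verify that this limit exists and equals $\pi \cdot W(\gamma; 0)$ under the Hungerbuhler convention, using that $\omega = \tfrac{1}{2} d\phi$ on $\Sigma \setminus \{0\}$. A subtle complementary check is that no loop realizes an angle outside $\tfrac{\pi}{2}\Z$, equivalently that smooth loops cannot have winding numbers with denominator larger than $2$; this holds because each transverse crossing of the origin contributes precisely $\pi$ to $\int d\phi$, so the total integral is always a half-integer multiple of $2\pi$. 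Once the limiting procedure is justified, the group identifications $\Z_2$ and $\Z_4$ follow immediately from the additivity of rotation angles in $SO(2)$.
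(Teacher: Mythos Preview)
Your proposal is correct and follows essentially the same route as the paper: reduce the holonomy computation to the set of attainable geometric phases $\{\pi W(\gamma;0)\bmod 2\pi\}$, observe that $W\in\Z$ on $\Sigma\setminus\{0\}$ gives $\{0,\pi\}\cong\Z_2$, and that allowing crossings of $0$ enlarges this to $W\in\tfrac{1}{2}\Z$ and hence $\{0,\tfrac{\pi}{2},\pi,\tfrac{3\pi}{2}\}\cong\Z_4$. If anything you are more careful than the paper, which does not discuss the limiting definition of parallel transport through the conical singularity or the exclusion of finer denominators; the paper simply invokes the half-integer winding number directly.
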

\begin{proof}
	\begin{enumerate}
		\item A generating element of the fundamental group of $\Sigma \setminus \{0\}$ is a simple closed loop with winding number 1.
		It has geometric phase of $\pi$. 
		It follows for the holonomy group:
		\begin{align}
			Hol(\Sigma \setminus \{0\}) = \{(k \cdot \pi) \text{  mod  } 2\pi : k \in \Z \} = \{0, \pi\} \cong \Z_2
		\end{align}
		\item The smallest geometric phase in a closed loop is given by a simple closed loop through 0, i.e. a closed loop with winding number $1/2$.
		It follows for the holonomy group:
		\begin{align}
			Hol(\Sigma) = \left\{\left(k \cdot \frac{\pi}{2}\right) \text{  mod  } 2\pi : k \in \Z \right\} = \left\{0, \frac{\pi}{2}, \pi\, \frac{3\pi}{2} \right\} \cong \Z_4
		\end{align}
	\end{enumerate}
\end{proof}

\section{General symmetric matrices}
We now extend our definitions to 2-by-2 symmetric matrices with any trace, i.e. to the space
\begin{align}
	Sym(2,\R) \coloneqq \left\{ A \in \R^{2\times 2} : A^\top = A \right\} \cong \R^3
\end{align}
with a global cartesian coordinates chart 
\begin{align} \label{cart-coords}
	\Phi : Sym(2,\R) &\to \R^3 \\
	\begin{pmatrix}
		x +z & y \\ y & -x +z
	\end{pmatrix}
	&\mapsto (x, y, z)
\end{align}
The space $\Sigma$ from previous chapter is a plane given by the points $(x,y,0)$.

The extension is straight-forward, as the directions of eigenvectors do not depend on the trace of the matrix.
We can thus simply take a direct sum of $\Sigma$ with a straight line representing the $z$-coordinate.
This leads to the following metric.
\begin{definition}[Metric]
	Let the space $Sym(2,\R)$ carry the metric:
	\begin{align}
		g = \begin{pmatrix}
			1+3x^2/r^2 & 3xy/r^2 & 0 \\
			3xy/r^2 & 1+3y^2/r^2 & 0 \\
			0 & 0 & 1
		\end{pmatrix}.
	\end{align}
\end{definition}
Let $(r, \phi, z)$ be the cylindrical coordinates of $(x,y,z)$ (i.e., $(r, \phi)$ are the polar coordinates of $(x,y)$, as before).
They are well-defined everywhere apart from the singular line:
\begin{align}
	L \coloneqq \{(0, 0, z) \in Sym(2, \R) : z \in \R\}
\end{align}
\begin{definition}[Frame]
	We can extend the orthonormal reference frame from previous chapter to $(e_1, e_2, e_3)$ given by:
	\begin{align}
		e_1 &\coloneqq \cos(\phi)\frac{\partial_r}{2} - \sin(\phi)\frac{\partial_\phi}{r} \\
		e_2 &\coloneqq \sin(\phi)\frac{\partial_r}{2} + \cos(\phi)\frac{\partial_\phi}{r} \\
		e_3 &\coloneqq \partial_z
	\end{align}
\end{definition}
\begin{definition}[Connection, curvature]
	The connection form now has, technically, 3  independent components: 
	\begin{align}
		\nabla \begin{pmatrix}e_1 \\ e_2 \\ e_3 \end{pmatrix} =
		\begin{pmatrix}
			0 & -\omega_{12} & -\omega_{13} \\
			\omega_{12} & 0 & -\omega_{23} \\
			\omega_{13} & \omega_{23} & 0
		\end{pmatrix}
	\begin{pmatrix}e_1 \\ e_2 \\ e_3 \end{pmatrix}
	\end{align}
	However, we have here
	\begin{align}
		\omega_{13} \equiv \omega_{23} \equiv 0
	\end{align}
	and so we may use the same notation as before:
	\begin{align}
		\omega(X) \coloneqq \omega_{12}(X) = g(e_1, \nabla_X e_2) = \frac{1}{2r^2}g(\partial_\phi, X) = \frac{1}{2}d\phi.
	\end{align}
	For the curvature form we obtain again:
	\begin{align}
		d\omega = \pi \delta(x,y) dA
	\end{align}
	where the area element is given by $dA = e_1^\flat \wedge e_2^\flat$ and the 2D-delta function is equal to 0 everywhere apart from the singular line $L$.
	It is understood so that if a 2-dimensional domain $U \subset Sym(2, \R)$ crosses the line $L$ $k$-times, then:
	\begin{align}
		\int_U d\omega = k \cdot \pi.
	\end{align}
\end{definition}
\begin{definition}[Geometric phase]
	Let $\gamma : [0, 1] \to Sym(2, \R)$ be a smooth curve. 
	The geometric phase $\theta(\gamma)$ along $\gamma$ as measured in the reference frame $(e_1, e_2, e_3)$ is given by:
	\begin{align}
		\theta(\gamma) \coloneqq \int_{im(\gamma)}\omega = \int_0^1 \omega(\gamma'(t))dt = \int_0^1 \frac{1}{2r^2}g(\partial_\phi, \gamma'(t)) dt
	\end{align}
	When the curve is closed: $\gamma(0) = \gamma(1)$, we have:
	\begin{align}
		\theta(\gamma) = \pi \cdot W(\gamma; L)
	\end{align}
	where $W(\gamma; L)$ is the winding number of $\gamma$ around the singular line $L$.
\end{definition}
\begin{proposition}
	The connection $\nabla$ is the Berry connection in the following sense: Let 
	\begin{align}
		E: U \subset &Sym(2, \R) \to T Sym(2, \R)\\
		(x,y,z) &\mapsto E(x,y,z) = E^1(x,y,z) e_1 + E^2(x,y,z) e_2
	\end{align}
	be a local smooth vector field with component functions $E^1, E^2 : U \to \R$ so that for each $(x,y,z) \in U$ the vector $(E^1(x,y,z), E^2(x,y,z))^\top$ is a unit-length eigenvector of the matrix $\begin{pmatrix} x+z & y \\ y & -x+z \end{pmatrix}$.
	Then $\nabla E \equiv 0$.
\end{proposition}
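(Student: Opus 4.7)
The plan is to reduce the statement to the traceless case by observing that adding $zI$ to a symmetric matrix only shifts its eigenvalues and leaves the eigenspaces intact. Thus the component functions $E^1(x,y,z)$, $E^2(x,y,z)$ may be chosen independent of $z$, and locally there are only four smooth unit-length eigenvector fields differing by sign or a $\pm\pi/2$ rotation, so it suffices to verify $\nabla E \equiv 0$ for the explicit choice
\[
E = \cos\left(\tfrac{\phi}{2}\right)e_1 + \sin\left(\tfrac{\phi}{2}\right)e_2
\]
used in Proposition \ref{prop:berry}.

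Away from the singular line $L$, the frame $(\partial_r, \partial_\phi, \partial_z)$ spans the tangent space, so I will check $\nabla_X E = 0$ for each of these three vector fields. For $X = \partial_r$ and $X = \partial_\phi$ the computation carries over verbatim from the proof of Proposition \ref{prop:berry}: the coefficient functions depend only on $\phi$, the values $\omega_{12}(\partial_r) = 0$ and $\omega_{12}(\partial_\phi) = \tfrac{1}{2}$ remain unchanged since $\omega_{12} = \tfrac{1}{2}d\phi$ also on $Sym(2,\R)$, and $\nabla_{\partial_r}e_i$, $\nabla_{\partial_\phi}e_i$ coincide with their $\Sigma$-counterparts. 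For $X = \partial_z$, the coefficient functions have vanishing $z$-derivative, so the calculation collapses to showing $\nabla_{\partial_z}e_1 = \nabla_{\partial_z}e_2 = 0$; this follows immediately from $\omega_{12}(\partial_z) = \tfrac{1}{2}d\phi(\partial_z) = 0$ together with the identities $\omega_{13} \equiv \omega_{23} \equiv 0$ declared in the Definition.

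The only conceptual step that may warrant elaboration is the vanishing of $\omega_{13}$ and $\omega_{23}$ taken for granted above. The cleanest justification is via uniqueness of the Levi-Civita connection applied to the product structure: the metric $g$ splits orthogonally as the cone metric on $\Sigma$ plus the flat metric $dz^2$ on the $z$-line, the frame vectors $e_1, e_2$ are $z$-independent combinations of $\partial_x, \partial_y$, and $e_3 = \partial_z$ is $(x,y)$-constant. The product connection then preserves each sub-bundle and is torsion-free and metric-compatible, so by uniqueness it equals $\nabla$ and the cross forms must vanish. Once this block structure is in hand, the remainder is a cut-and-paste from Proposition \ref{prop:berry}, which is why the main obstacle is really only bookkeeping rather than anything substantive.
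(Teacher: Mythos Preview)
Your proof is correct and follows essentially the same approach as the paper: the paper's proof simply exhibits the explicit eigenvector field $E = \cos(\phi/2)\,e_1 + \sin(\phi/2)\,e_2$ and says ``by the same computation as in Proposition~\ref{prop:berry}'' that $\nabla E \equiv 0$. Your version is in fact more thorough, since you explicitly treat the $\partial_z$-direction and justify the vanishing of the cross forms $\omega_{13},\omega_{23}$ via the Riemannian product structure, both of which the paper takes for granted.
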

\begin{proof}
	The vector field $E \coloneqq \cos\left(\frac{\phi}{2}\right)e_1 + \sin\left(\frac{\phi}{2}\right)e_2$ is a field of unit-length eigenvectors (see Appendix \ref{app:eig}).
	It follows by the same computation as in Proposition \ref{prop:berry} that $\nabla E \equiv 0$.
\end{proof}
\begin{proposition}[Holonomy]
	The holonomy groups of $Sym(2,\R) \setminus L$, respectively of $Sym(2,\R)$ are given by:
	\begin{enumerate}
		\item $Hol(Sym(2,\R) \setminus L) = \Z_2$
		\item $Hol(Sym(2,\R)) = \Z_4$
	\end{enumerate}
\end{proposition}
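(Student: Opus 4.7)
The plan is to reduce the statement to Proposition \ref{prop:holonomy} by exploiting the product structure of $(Sym(2,\R), g)$. In cylindrical coordinates $(r,\phi,z)$ the metric is block-diagonal: its $(r,\phi)$-block coincides with the metric on $\Sigma$ from the previous section, while the $z$-direction is Euclidean and decoupled. Because $\omega_{13} \equiv \omega_{23} \equiv 0$, the Levi-Civita connection respects this splitting. Consequently, parallel transport of $e_3$ along any curve is trivial, and parallel transport within $\mathrm{span}(e_1, e_2)$ is governed entirely by the 2D connection form $\omega = \tfrac{1}{2}\, d\phi$ pulled back from $\Sigma$ via the projection $(x,y,z) \mapsto (x,y)$.

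Given this reduction, the holonomy along any closed loop $\gamma$ acts as a rotation in the $(e_1, e_2)$-plane by the angle $\theta(\gamma) = \pi \cdot W(\gamma; L)$, as stated in the geometric phase definition just before the proposition, with no contribution along $e_3$. The two holonomy computations then reduce to identifying the set of realizable winding numbers in each case.

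For part 1, any closed curve in $Sym(2,\R) \setminus L$ projects to a closed curve in $\Sigma \setminus \{0\}$, so its winding number around $L$ is an integer $k$. Hence $\theta(\gamma) \in \pi\Z$, and reducing modulo $2\pi$ yields $\{0, \pi\} \cong \Z_2$. A generator is obtained by lifting a simple loop around $0 \in \Sigma$ to any constant-$z$ slice.

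For part 2, curves are now allowed to cross $L$, and a simple closed loop transversally crossing $L$ once has winding number $1/2$ (as noted after the winding number definition, invoking \cite{hungerbuehler}), yielding geometric phase $\pi/2$. This element generates $\{0, \pi/2, \pi, 3\pi/2\} \cong \Z_4$. The main obstacle, and the step requiring care, is verifying that no loop produces a geometric phase strictly between $0$ and $\pi/2$ modulo $2\pi$; this follows from the classification of non-integer winding numbers of smooth curves through a point in \cite{hungerbuehler}, according to which $1/2$ is the infimum of positive winding numbers achievable by smooth closed curves that may cross $L$ transversally.
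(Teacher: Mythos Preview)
Your proposal is correct and follows essentially the same route as the paper: reduce to Proposition~\ref{prop:holonomy} by replacing the singular point $0$ with the singular line $L$ and use $\theta(\gamma)=\pi\cdot W(\gamma;L)$. The paper's own proof is in fact a single sentence to this effect; your explicit justification of the reduction via the product structure and the vanishing of $\omega_{13},\omega_{23}$, together with your remark on why no phase strictly between $0$ and $\pi/2$ can occur, supplies detail the paper leaves implicit.
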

\begin{proof}
	The proof is the same as the proof of Proposition \ref{prop:holonomy}, except that the generators are now loops with winding numbers of 1, respecitvely $1/2$, being measured around the line $L$ instead of the point 0.
\end{proof}
We have so far shown that local fields of eigenvectors $E$ are parallel, i.e. they satisfy the equation $\nabla E \equiv 0$.
Conversely, we can find eigenvectors along smooth curves given an eigenvector at an initial point by solving the parallel transport ODE:
\begin{proposition}
	Let $\gamma : I = [0, 1] \to Sym(2,\R)$ be a smooth curve and let $E_0 \in T_{\gamma(0)}Sym(2,\R)$ be an eigenvector of $\gamma(0) \in Sym(2,\R)$ (in the sense that $E_0 = E_0^1 e_1(\gamma(0)) + E_0^2 e_2(\gamma(0))$ and $(E_0^1, E_0^2)^\top$ is an eigenvector of $\gamma(0)$).
	Furthermore, let $E : I \to T(im(\gamma)) \subset TSym(2,\R)$ be a vector field along $\gamma$ which satisfies the initial value problem:
	\begin{align}
		E(0) &= E_0 \\
		\forall t \in I : \nabla_{\gamma'(t)} E(t) &= 0.
	\end{align}
	Then $E(t)$ is an eigenvector of $\gamma(t)$ for each $t \in I$.
\end{proposition}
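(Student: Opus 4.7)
The plan is to combine the preceding proposition, which shows that smooth local fields of eigenvectors are parallel, with existence and uniqueness of solutions to the parallel transport ODE. Define $J \coloneqq \{t \in I : E(t) \text{ is an eigenvector of } \gamma(t)\}$; the goal is to show $J = I$.

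First I would establish two easy facts: (i) $J$ is closed in $I$, because being an eigenvector of a symmetric matrix is a closed condition on the pair $(A, v)$ (for a 2-by-2 matrix, $\det[Av \mid v] = 0$) and $t \mapsto (\gamma(t), E(t))$ is continuous; (ii) $\gamma^{-1}(L) \subseteq J$ trivially, since on the singular line the matrix is a scalar multiple of the identity and every vector is an eigenvector.

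The core step is to show that $J$ contains a full neighborhood of every regular point $t_0 \in J$ with $\gamma(t_0) \notin L$. At such $t_0$ the four unit-length eigenvectors $\pm \hat E$, $\pm \hat E^{\perp}$ with $\hat E = \cos(\phi/2) e_1 + \sin(\phi/2) e_2$ (from the preceding proposition and its three sign/$\pi/2$-rotation variants) extend to smooth local eigenvector fields on a neighborhood $U$ of $\gamma(t_0)$. Since $\nabla$ is the Levi-Civita connection, parallel transport preserves the $g$-norm, so $\|E(t)\|_g \equiv \|E_0\|_g$; therefore $E(t_0)$ equals $\|E_0\|_g$ times one of these four unit eigenvectors, and we let $\tilde E$ denote the corresponding (scaled) smooth extension. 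By the preceding proposition and linearity of $\nabla$, $\nabla \tilde E \equiv 0$ on $U$, so both $E$ and $\tilde E \circ \gamma$ satisfy the same linear first-order parallel transport IVP on $\gamma^{-1}(U)$. Uniqueness for linear ODEs forces them to agree there, so $t \in J$ for all $t$ near $t_0$.

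The main obstacle is propagating $J$ across the singular set $\gamma^{-1}(L)$. Since $J$ is closed in $I$ and open within $I \setminus \gamma^{-1}(L)$, each connected component of $I \setminus \gamma^{-1}(L)$ that meets $J$ lies entirely in $J$; in particular, the component containing $0$ does, by hypothesis. To pass from one regular component to the next across a transversal crossing $\gamma(t_*) \in L$, I would exploit the fact that the Cartesian expressions for $(e_1, e_2)$ depend only on $2\phi$ and are therefore continuous across $t_*$, even though the polar angle jumps by $\pi$; the continuity of $E$ in Cartesian coordinates, combined with the eigenvector condition $\psi \equiv \phi/2 \pmod{\pi/2}$ on either side (where $\psi$ is the frame-angle of $E$), then forces the outgoing regular component to lie in $J$ as well. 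Tangential crossings can be handled by approximation with transversal curves and continuous dependence of ODE solutions on $\gamma$. Iterating across all crossings exhausts $I$ and yields $J = I$.
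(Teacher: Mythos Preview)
The paper does not supply a proof of this proposition; it is stated immediately after the preceding result (local eigenvector fields satisfy $\nabla E \equiv 0$) and is evidently meant to be read as its direct corollary via uniqueness of parallel transport. Your core argument for curves in $Sym(2,\R)\setminus L$---extend the initial eigenvector to the parallel local eigenvector field $\|E_0\|_g\cdot\hat E$ from the preceding proposition and invoke uniqueness for the linear first-order ODE---is exactly that implicit argument, and it is correct.

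Your final paragraph, which attempts to propagate the conclusion across crossings of the singular line $L$, goes beyond anything the paper does, and here there is a genuine gap. The metric $g$ (and hence $\nabla$) is singular on $L$, so the initial value problem $\nabla_{\gamma'}E=0$ is not well-posed at a crossing time $t_*$; in particular, your assertion that $E$ is continuous in Cartesian coordinates across $t_*$ does not follow from the ODE, since the Christoffel symbols of $g$ blow up like $1/r$ there even though the frame $(e_1,e_2)$ itself extends continuously. The paper handles curves through $L$ only via the integrated geometric phase $\int\omega=\tfrac12\int d\phi$ and half-integer winding numbers, never via the pointwise ODE. The cleanest reading of the proposition---and the one consistent with the hypothesis that $e_1(\gamma(0)),e_2(\gamma(0))$ exist---is that $\gamma$ avoids $L$; then $I\setminus\gamma^{-1}(L)=I$ is connected, and your open--closed argument already finishes without the last paragraph.
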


\begin{remark}[Summary/Open problems]
	The above proposition provides a numerical method for computing eigenvectors along a smooth 1-parameter-family of symmetric matrices, or a discrete sampling thereof.
	If the sample size $N \gg 0$ is large, instead of solving the algebraic eigenvector problem $N$ times, with our method one only needs to do it once and then use a numerical scheme for the parallel transport ODE.
	Of course, for 2-by-2 matrices, one does need to worry about computation speeds. 
	However, for large matrices, the algebraic eigenvector computations become very costly.
	The problem lies in computing the appropriate metric and the connection coefficients beforehand.
	Further research is needed to provide an extension of the conic metric to higher-dimensional matrix spaces.
\end{remark}

\section{Unwinding in a Covering Space}
In the previous sections, we described the rotation of eigenvectors when moving through the space of symmetric matrices by means of a metric tensor field.
This was a geometric, local approach.
In this section, we will concentrate on the main inconvenience, which is that closed curves winding once around the line $L$ do not map eigenvectors back to themselves but to their negatives (in other words: the goemetric phase along such curves is $\pi$).
We will use global, topological methods to deal with it.

The main idea is to track the eigenvectors when moving in the space of matrices and use those trackers to differentiate whether the number of windings of a closed curve was odd or even.
In other words, given a closed curve, we want to consider a given starting point matrix $A_0 \in Sym(2,\R)$ as being \textit{different} to the endpoint matrix $A_1$, if the eigenvectos have flipped when going from $A_0$ to $A_1$, even when the underlying matrices are equal.
The way to achieve this tracking is to lift the curves from $Sym(2,\R)$ to a covering space.

It should be enough for the tracker to have a binary true-false value: either the vectors have flipped or they have not.
In other words, we are looking for a \textit{double covering} of $Sym(2,\R)$.
A natural way to do it is to ``unwind" the space $Sym(2,\R)$ by a rotation around the singular line $L$.
As the eigenvectors do not change along $L$, we will start with the 2-dimensional plane $\Sigma$ for better clarity.
\begin{definition}
	Let $\bar{\Sigma} \coloneqq \R^2 \cong \C$.
	Consider the mapping:
	\begin{align}
		\pi_{\Sigma} : \bar{\Sigma} &\to \Sigma \\
		re^{i\phi} &\mapsto re^{i2\phi}
	\end{align}
	We call it the \textit{natural double covering of $\Sigma$ with branching point 0}
	and we call the space $\bar{\Sigma}$ the \textit{branched covering space} of $\Sigma$.
\end{definition}

\begin{remark}
	For each non-zero $A = re^{i\phi} \in \Sigma$ the preimage contains two points:
	\begin{align}
		\pi_{\Sigma}^{-1}(A) &= \left\lbrace re^{i\frac{\phi}{2}}, re^{i \left( \frac{\phi}{2}+\pi \right)} \right\rbrace \\
		&= \{ \pm re^{i\frac{\phi}{2}} \}
	\end{align}
	while for $A=0$ it has only one point:
	\begin{align}
		\pi_{\Sigma}^{-1}(0) = \{0\}.
	\end{align}
	Hence the origin is considered a branching point.
\end{remark}

\begin{proposition}[Geometry of covering space]
	\begin{enumerate}
		\item The pullback metric $h \coloneqq \pi_{\Sigma}^*g$ is given in polar coordinates by:
		\begin{align}
			h_{pol} = \begin{pmatrix}
				4 & 0 \\ 0 & 4r^2
			\end{pmatrix}
		\end{align}
		\item The connection form $\omega^h$ for the Levi-Civita connection of $h$ is:
		\begin{align}
			\omega_h = d\phi
		\end{align}
		\item The geometric phase along a closed smooth curve $\gamma : S^1 \to \bar{\Sigma}$ is:
		\begin{align}
			\theta(\gamma) = 2\pi \cdot W(\gamma; 0)
		\end{align}
		\item The holonomy group of $\bar{\Sigma} \setminus \{0\}$, respectively of $\bar{\Sigma}$, is:
		\begin{align}
			Hol(\Sigma \setminus \{0\}) &= 0 \\
			Hol(\Sigma) &= \Z_2
		\end{align}
	\end{enumerate}
\end{proposition}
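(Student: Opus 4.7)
The plan is to exploit the fact that in polar coordinates the covering map $\pi_\Sigma$ is simply $(\bar r, \bar\phi) \mapsto (\bar r, 2\bar\phi)$, and then to leverage the pullback behaviour of metrics, connections, and 1-forms under local isometries. All four claims cascade from this single observation, so the proof is really about setting up the right pullback framework.

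For part (1) I would substitute the coordinate change directly into the polar expression \eqref{polar-metric}: the pullback sends $dr \mapsto d\bar r$ and $d\phi \mapsto 2\,d\bar\phi$, so $g_{pol} = 4\,dr^2 + r^2\,d\phi^2$ becomes $h_{pol} = 4\,d\bar r^2 + 4\bar r^2\,d\bar\phi^2$, which is exactly the claimed diagonal form. For part (2), the key observation is that away from the branching point $\pi_\Sigma : \bar\Sigma\setminus\{0\} \to \Sigma\setminus\{0\}$ is a local diffeomorphism and, by definition of $h = \pi_\Sigma^* g$, a local isometry. Local isometries intertwine Levi-Civita connections, and if $(\bar e_1, \bar e_2)$ is declared to be the pullback of $(e_1, e_2)$ then $\omega_h = \pi_\Sigma^* \omega_g$. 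Combining with the earlier identity $\omega_g = \tfrac{1}{2}d\phi$ yields $\omega_h = \tfrac{1}{2}\pi_\Sigma^* d\phi = \tfrac{1}{2}\cdot 2\,d\bar\phi = d\bar\phi$. As a sanity check one can recompute $\omega$ directly from the polar orthonormal frame $(\partial_{\bar r}/2,\, \partial_{\bar\phi}/(2\bar r))$ on $\bar\Sigma$ and then apply the gauge change $\omega' = \omega + d\alpha$ with $\alpha = 2\bar\phi$.

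Part (3) is then immediate: for any smooth closed curve $\gamma$, Stokes or direct integration gives $\theta(\gamma) = \int_\gamma \omega_h = \int_\gamma d\bar\phi = 2\pi\,W(\gamma;0)$. For part (4) I would split into two cases mirroring Proposition~\ref{prop:holonomy}. On $\bar\Sigma\setminus\{0\}$ the winding number of any loop is an integer, so every geometric phase lies in $2\pi\Z$ and the holonomy group modulo $2\pi$ collapses to $\{0\}$. On $\bar\Sigma$ the loops may pass through the origin, and invoking the non-integer winding number theory from the Hungerbühler reference, a simple loop crossing $0$ once has $W = \tfrac{1}{2}$ and hence geometric phase $\pi$; this generates the group $\{0,\pi\} \cong \Z_2$.

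The main obstacle is the careful handling of step (2) at the branching point. The identification $\omega_h = \pi_\Sigma^* \omega_g$ only holds on the punctured cover because $\pi_\Sigma$ fails to be a local diffeomorphism at $0$, yet the pullback metric $h$ extends smoothly across $0$ (in fact becoming the rescaled Euclidean metric $4(dx^2+dy^2)$, so $\bar\Sigma$ is intrinsically flat). The subtlety is that the reference frame itself does not extend continuously to the origin — it still carries the $2\bar\phi$-rotation — and this is precisely what produces the apparent $\Z_2$ holonomy on the otherwise flat $\bar\Sigma$. In writing up the proof I would be explicit that the holonomy in (4) is measured against this singular reference frame, and that the non-integer winding numbers are interpreted in the sense of the cited work.
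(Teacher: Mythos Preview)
Your proposal is correct and follows essentially the same route as the paper: both arguments compute the pullback metric via the polar Jacobian of $\pi_\Sigma$, obtain $\omega_h$ as the pullback of $\omega = \tfrac{1}{2}d\phi$ using that $\pi_\Sigma$ is a local isometry onto the pulled-back frame, and then read off the geometric phase and holonomy from the winding number exactly as in Proposition~\ref{prop:holonomy}. Your additional remarks---that $h$ extends to the origin as the flat metric $4(dx^2+dy^2)$ and that the residual $\Z_2$ holonomy is an artifact of the reference frame failing to extend continuously across $0$---are not in the paper's proof but are correct and clarify a point the paper leaves implicit.
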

\begin{proof}
	\begin{enumerate}
		\item The Jacobian $D\pi_{\Sigma}$ maps $\partial_r$ to $\partial_r$ and $\partial_\phi$ to $2\partial_\phi$.
		It follows:
		\begin{align}
			h(\partial_r, \partial_r) &= g(\partial_r, \partial_r) = 4 \\
			h(\partial_\phi, \partial_\phi) &= g(2\partial_\phi, 2\partial_\phi) = 4(\partial_\phi, \partial_\phi) = 4r^2 \\
			h(\partial_r, \partial_\phi) &= g(\partial_r, 2\partial_\phi) = 0
		\end{align}
		\item Let $\nabla^h$ be the Levi-Civita connection of $h$ and let $(f_1, f_2)$ be the pullback frame of $(e_1, e_2)$.
		Then the connection form $\omega^h$ of $\nabla^h$ with respect to the frame $(f_1, f_2)$ is the pullback of the connection form $\omega$.
		It follows for the polar coordinate vectors:
		\begin{align}
			\omega^h(\partial_r) &= \omega(\partial_r) = 0 \\
			\omega^h(\partial_\phi) &= \omega(2\partial_\phi) = 1
		\end{align}
		from which it follows that $\omega^h = d\phi$.
		\item This follows directly from 2.:
		\begin{align}
			\theta(\gamma) = \int_{im(\gamma)}\omega^h = \int_{im(\gamma)} d\phi = 2\pi \cdot W(\gamma; 0)
		\end{align}
		\item The smallest winding numbers of smooth curves are 1 for $\Sigma \setminus \{0\}$ (with geometric phase $2\pi$) and $1/2$ for $\Sigma$ (with geometric phase $\pi$), respectively. 
		It follows for the holonomy groups:
		\begin{align}
			Hol(\Sigma \setminus \{0\}) &= \{(k \cdot 2\pi) \text{  mod  } 2\pi : k \in \Z \} = \{0\}  \\
			Hol(\Sigma) &= \{(k \cdot \pi) \text{  mod  } 2\pi : k \in \Z \} = \{0, \pi\} \cong \Z_2
		\end{align}
	\end{enumerate}
\end{proof}

\begin{remark}
	The covering extends naturally to $Sym(2,\R) = L \oplus \Sigma$ by setting:
	\begin{align}
		\pi_{Sym(2,\R)} : \overline{Sym(2,\R) } \coloneqq Sym(2,\R)  &\to Sym(2,\R)  \\
		A = A_L + A_{\Sigma} &\mapsto A_L + \pi_{\Sigma}(A_{\Sigma})
	\end{align}
	Note that the covering space $\overline{Sym(2,\R)}$ has a continuum of branching points, namely the line $L$.
\end{remark}
\begin{remark}
	By lifting to the covering space $\overline{Sym(2,\R)}$, we have achieved that only those closed curves which map eigenvectors back to themselves are considered closed, thus trivializing the holonomy group of $Sym(2,\R) \setminus L$.
	However, the holonomy group of the entire space $Sym(2,\R)$, which includes half-integer winding numbers of curves crossing $L$, got reduced from $Z_4$ to $Z_2$.
	To eliminate it completely, we can lift once again to $\overline{\overline{Sym(2,\R)}}$.
\end{remark}

\section{Application: Mass-spring systems}
Consider a mechanical system with $N$ degrees of freedom whose dynamics is governed by a potential energy function
\begin{align}
	V : \R^N &\to \R \\
	x &\mapsto V(x)
\end{align}
which maps every element $x$ of the position space $\R^N$ to a real number.
We will assume that there is a unique equilibrium position $x_0 \in \R^N$ in which $\nabla V(x_0) = 0$.
Then, using the harmonic approximation of Newton's second law, the dynamics close to $x_0$ is determined up to second order by the Hessian $\nabla^2V(x_0)$, which is a symmetric matrix.

Assume further that the mechanical system in question depends on a number of parameters which can be represented as an elements in a smooth parameter manifold $K$.
Note that the dimension of $K$ (i.e. the number of independent parameters) does not need to agree with the number $N$ of degrees of freedom.
In this set-up, for each element $k \in K$ we obtain a symmetric matrix, namely the Hessian around the equilibrium.
This leads to a mapping:
\begin{align}
	F : K &\to Sym(N, \R) \\
	k &\mapsto F(k) = \nabla^2V(x_0; k)
\end{align}
If we know a metric $g$ on $Sym(N, \R)$ which parallelizes eigenvectors (i.e. one whose Levi-Civita connection is the Berry connection), we can pull it back to a metric $F^*g$ on the parameter space $K$ and do all computations and analysis there.

We present this strategy on mass-spring systems consisting of two mass points ($N = 2$) moving on a straight line with a different number and topology of springs connecting them.

\subsection{Fixed boundary condition}
Consider a mechanical system consisiting of two unit mass points on a straight line, connected to each other and to two fixed walls with three springs.
If the three spring constants are $(\kappa_1, \kappa_2, \kappa_3) \in K \coloneqq \R^3$, the stable length of the springs is $a \in \R$, and the change of length of the $i$-th spring compared to its stable length is $\Delta l_i$, then the energy function is given by:
\begin{align}
	V : \R^2 &\to \R \\
	(x_1, x_2) &\mapsto \frac{1}{2}(\kappa_1 \Delta l_1^2 + \kappa_2 \Delta l_2^2 + \kappa_3 \Delta l_3^2) \\
	&= \frac{1}{2}(\kappa_1 (x_1-a)^2 + \kappa_2(x_2-x_1-a)^2 + \kappa_3(3a-x_2-a)^2)
\end{align}
Its Hessian is:
\begin{align}
\nabla^2V(x_0) = 
\begin{pmatrix}
\kappa_1+\kappa_2 & -\kappa_2 \\ -\kappa_2 & \kappa_2+\kappa_3
\end{pmatrix}
= \Phi^{-1}\left(\frac{1}{2}(\kappa_1 - \kappa_3), -\kappa_2, \frac{1}{2}(\kappa_1 + 2\kappa_2 + \kappa_3)\right).
\end{align}
where the last term is the expression in cartesian coordinates $(x,y,z)$ of $Sym(2, \R)$ (see (\ref{cart-coords})).

The mapping $F$ is then given in coordinates by:
\begin{align}
	\Phi \circ F : K=\R^3 &\to \R^3 \\
	(\kappa_1, \kappa_2, \kappa_3) &\mapsto  \left(\frac{1}{2}(\kappa_1 - \kappa_3), -\kappa_2, \frac{1}{2}(\kappa_1 + 2\kappa_2 + \kappa_3)\right)
\end{align}
which is a linear automorphism of $\R^3$.
We can represent $F$ by the matrix:
\begin{align}
	\mathcal{M} \coloneqq Mat(F) = \begin{pmatrix}
		\frac{1}{2} & 0 & -\frac{1}{2} \\
		0 & -1 & 0 \\
		\frac{1}{2} & 1 & \frac{1}{2}
	\end{pmatrix}.
\end{align}
The pullback metric $F^*g$ on $K$ is thus given by:
\begin{align}
	F^*g = \mathcal{M}^\top g \mathcal{M} = \frac{1}{4r^2}
	\begin{pmatrix}
		r^2+3x^2 & 2r^2 - 6xy & r^2-3x^2 \\
		2r^2-6xy & 8r^2+12y^2 & 2r^2+6xy \\
		r^2 - 3x^2 & 2r^2 + 6xy & r^2+3x^2
	\end{pmatrix}.
\end{align}
where $(x,y,z) = \left(\frac{1}{2}(\kappa_1 - \kappa_3), -\kappa_2, \frac{1}{2}(\kappa_1 + 2\kappa_2 + \kappa_3)\right)$ and $r^2 = x^2+y^2$.

\subsection{Open boundary condition}
Now, consider a mechanical system consisiting of two unit mass points on a straight line, connected only to each other with one spring. If the one spring constant is $\kappa \in K \coloneqq \R$, the stable length of the spring is $a \in \R$, and the change of length of the spring compared to its stable length is $\Delta l$, then the energy function is given by:
\begin{align}
	V : \R^2 &\to \R \\
	(x_1, x_2) &\mapsto \frac{1}{2}\kappa \Delta l^2 \\
	&= \frac{1}{2}\kappa(x_2 - x_1 - a)^2
\end{align}
whose Hessian is:
\begin{align}
	\nabla^2 V(x_0) = \begin{pmatrix} \kappa & -\kappa \\ -\kappa & \kappa \end{pmatrix} = \Phi^{-1}(0, -\kappa, \kappa)
\end{align}
where the last term is the expression in cartesian coordinates $(x,y,z)$ of $Sym(2, \R)$ (see (\ref{cart-coords})).

The mapping $F$ is then given in coordinates by:
\begin{align}
	\Phi \circ F : K=\R &\to \R^3 \\
	\kappa &\mapsto (0, -\kappa, \kappa)
\end{align}
which is a linear embedding of $K = \R$ into $\R^3$.
We can represent $F$ by the matrix:
\begin{align}
	\mathcal{M} \coloneqq Mat(F) = \begin{pmatrix}
		0 \\
		-1\\
		1 
	\end{pmatrix}.
\end{align}
The pullback metric $F^*g$ on $K$ is thus the 1-by-1 matrix given by:
\begin{align}
	F^*g = \mathcal{M}^\top g \mathcal{M} = \left(2+3\frac{y^2}{r^2}\right) \equiv (5).
\end{align}

\subsection{Periodic boundary condiditon}
Finally, consider a mechanical system consisiting of two unit mass points on a flat circle, obtained from a straight line $\R$ by the equivalence relation $x \sim x + 2a$. 
Let the first mass point be connected to the second with one spring and the second mass points connect back to the first one with another
spring. 
If the two spring constants are $(\kappa_1, \kappa_2) \in K := \R^2$, the stable length of the springs is $a \in \R$, and the change of length of the $i$-th spring compared to its stable length is $\Delta l_i$, then the energy function is given by:
\begin{align}
	V : \R^2 &\to \R \\
	(x_1, x_2) &\mapsto \frac{1}{2}(\kappa_1 \Delta l_1^2 + \kappa_2 \Delta l_2^2) \\
	&= \frac{1}{2}(\kappa_1 (x_2-x_1-a)^2 + \kappa_2(x_2+2a-x_1-a)^2)
\end{align}
Its Hessian is:
\begin{align}
\nabla^2V(x_0) = 
\begin{pmatrix}
\kappa_1+\kappa_2 & -\kappa_1-\kappa_2 \\ -\kappa_1-\kappa_2 & \kappa_1+\kappa_2
\end{pmatrix}
= \Phi^{-1}\left(0, -\kappa_1 - \kappa_2, \kappa_1 + \kappa_2 \right).
\end{align}
where the last term is the expression in cartesian coordinates $(x,y,z)$ of $Sym(2, \R)$ (see (\ref{cart-coords})).

The mapping $F$ is then given in coordinates by:
\begin{align}
	\Phi \circ F : K=\R^3 &\to \R^3 \\
	(\kappa_1, \kappa_2, \kappa_3) &\mapsto  \left(0, -\kappa_1 - \kappa_2, \kappa_1 + \kappa_2 \right)
\end{align}
which is a singular linear mapping from $\R^2$ to $\R^3$.
We can represent $F$ by the matrix:
\begin{align}
	\mathcal{M} \coloneqq Mat(F) = \begin{pmatrix}
		0 & 0\\
		-1 & -1 \\
		1 & 1
	\end{pmatrix}.
\end{align}
The pullback metric $F^*g$ on $K$ is thus given by:
\begin{align}
	F^*g = \mathcal{M}^\top g \mathcal{M} = \left(2+3\frac{y^2}{r^2}\right) \cdot 
	\begin{pmatrix}1&1\\1&1\end{pmatrix} \equiv \begin{pmatrix}5&5\\5&5\end{pmatrix} .
\end{align}

\begin{remark}
The metric in the case of periodic boundary condition is singular.
The kernel is spanned by the vector $(1, -1) \in TK$.
This is because the physical system is equivalent to one with open boundary condition with spring constant $\kappa_1 + \kappa_2$.
It is therefore clear that changing the parameter vector $k = (\kappa_1, \kappa_2)$ by a multiple of $(1, -1)$ does not affect the system.
Indeed, in this case the metric measures no change.
\end{remark}

\begin{figure}[h]
\includegraphics[width=\textwidth]{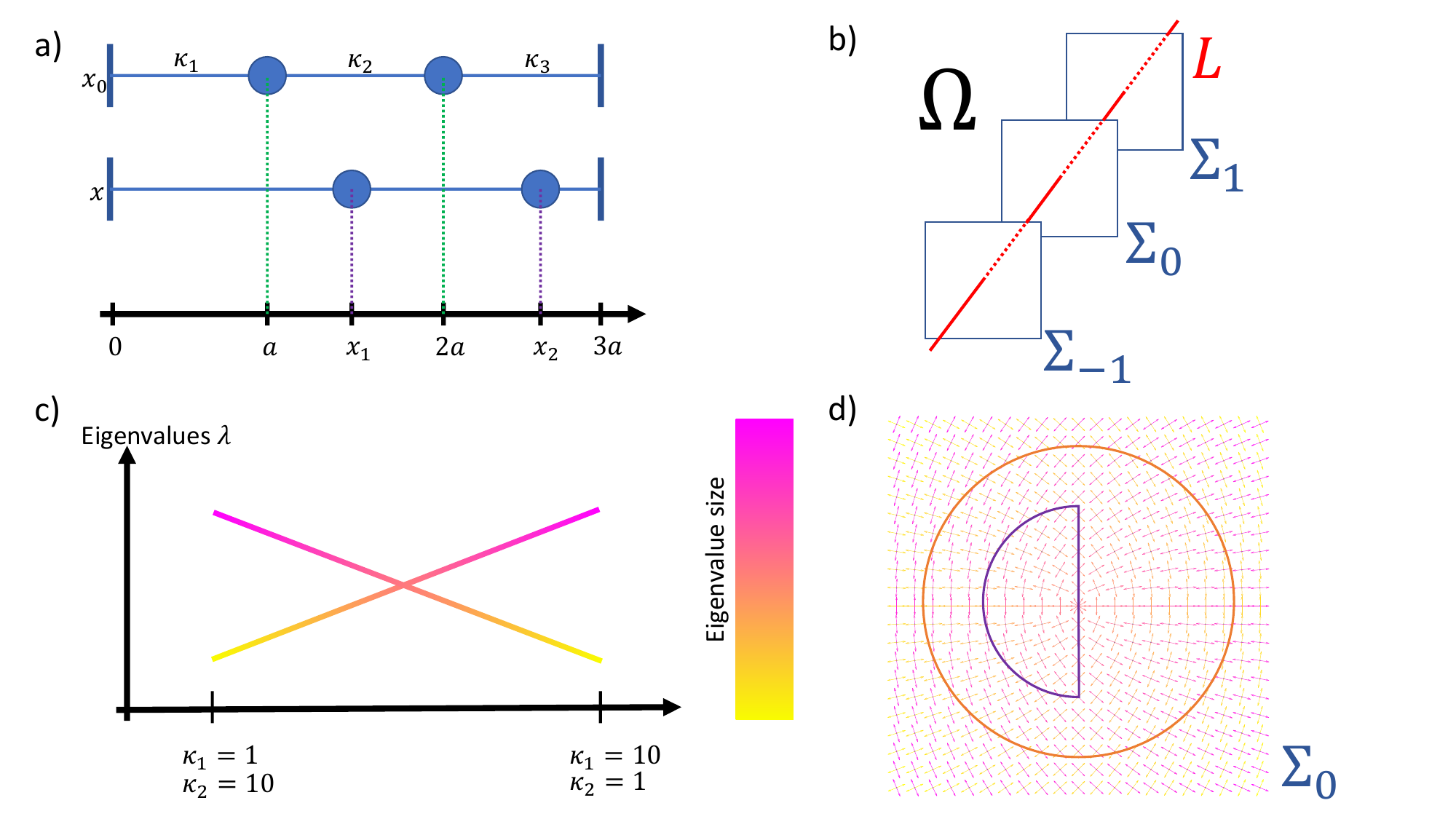}
\caption{a) The spring system in stable state $x_0$ and in a disturbed state $x$. b) The space $\Omega$ of symmetric matrices with its affine-linear subspaces. c) Eigenvalue exchange along a line crossing $L$, e.g. the diameter of the half-circle. d) A slice $\Sigma_0$ with unit eigenvectors plotted at the underlying matrices $A \in \Sigma_0$. The colors of the vectors represent the size of the corresponding eigenvalues. Orange circle as an example of a curve with winding number $\pm 1$; eigenvectors rotate by $\pm \pi$ when moving along it. Purple half-circle through $L$ as an example of a curve with winding number $\pm \frac{1}{2}$; eigenvectors rotate by $\pm \frac{\pi}{2}$ along it.}
\centering
\end{figure}

\section{Funding}
The authors acknowledge the support of the Cluster of Excellence ``Matters of Activity. Image Space Material'' funded by the Deutsche Forschungsgemeinschaft (DFG, German Research Foundation) under Germany's Excellence Strategy – EXC 2025 – 390648296.

\appendix
\section{Intrinsic and extrinsic formulae for Berry connection and curvature} \label{app:conn}
Let $U \subset \R^2$ be an open domain and
\begin{align}
	f : U \to \R^3
\end{align}
an embedding, i.e. an injective, smooth map so that for each $p \in U$ the Jacobian 
\begin{align}
	f_*|_p \coloneqq Df|_p : T_pU \to T_{f(p)}\R^3
\end{align}
has full rank.
The metric $g$ is given for any two vectors $u, v \in T_pU$ as:
\begin{align}
	g(u,v) \coloneqq \langle f_*u, f_*v \rangle
\end{align}
where $\langle .,.\rangle$ is the standard Euclidean metric on $\R^3$.

Let $e_1, e_2 : U \to TU$ be two orthonormal vector fields and let $E_i \coloneqq f_*e_i$ for $i \in \{1, 2\}$. 
Furthermore, let $\nu = E_3 \coloneqq E_1 \times E_2$ be a unit normal field (where $\times$ is the ``cross product'' on $\R^3$). 
Then $(E_1, E_2, \nu)$ is an orthonormal frame in $\R^3$ along $f(U)$. 
Each of those three fields is a function of the two coordinates of $U$ and we may look at the two corresponding partial derivatives. 
As the fields have constant lengths, their derivatives have no component along themselves.
Furhtermore, as they are orthogonal, moving the derivative to the other factor is antisymmetric:
\begin{align}
	0 &= \partial_i \langle E_j, E_j \rangle = 2\langle E_j, \partial_i E_j \rangle \\
	0 &= \partial_i \langle E_j, E_k \rangle = \langle \partial_i E_j, E_k \rangle + \langle E_j, \partial_i E_k \rangle 
\end{align}
In particular, the derivatives of the normal are tangential; the two components define the second fundamental form:
\begin{align}
	II(U, V) \coloneqq -\langle D_U \nu, V \rangle 
\end{align}
for all $U,V \in T\R^3$ tangential to $f(U)$.
We obtain the following formulae:
\begin{align}
	\partial_i E_1 &= -\langle E_1, \partial_i E_2\rangle E_2 + II(\partial_i, E_1) \nu \\
	\partial_i E_2 &= \langle E_1, \partial_i E_2\rangle E_1 + II(\partial_i, E_2) \nu
\end{align}
Projecting onto the tangent space of $f(U)$ we obtain the covariant derivatives:
\begin{align}
	\nabla_i E_1 &= -\langle E_1, \partial_i E_2\rangle E_2 \\
	\nabla_i E_2 &= \langle E_1, \partial_i E_2\rangle E_1
\end{align}
Defining 
\begin{align}\label{con1}
	\omega_i \coloneqq \langle E_1, \partial_i E_2\rangle = \langle E_1, \nabla_i E_2\rangle
\end{align}
we obtain the short-hand form:
\begin{align}
	\nabla_i \begin{pmatrix}
		E_1 \\ E_2
	\end{pmatrix}
= \begin{pmatrix}
	0 & -\omega_i \\
	\omega_i & 0
\end{pmatrix}
\begin{pmatrix}
	E_1 \\ E_2
\end{pmatrix}
\end{align}
We see that, given the tangent orthonormal frame $(E_1, E_2)$, the functions $\omega_i : f(U) \to \R$ determine the covariant derivative $\nabla$. 
They are the components of the connection 1-form $\omega = \omega_1 dx^1 + \omega_2 dx^2$.

We can now move the definition of the covariant derivative to the domain $U$: as the fields $\nabla_i E_j$ are tangent to $f(U)$ and $f_*$ has full rank everywhere, there exist vector fields $\nabla_i e_j$ defined by the property
\begin{align}
	f_*\left(\nabla_i e_j \right)= \nabla_i E_j
\end{align}
This defines the operator $\nabla$ on $U$. 
Note that it can also be computed purely from $g$ and its derivatives without any reference to the normal $\nu$ or the embedding $f$ via: 
\begin{align}
	g(\nabla_k \partial_l, \partial_i) = \frac{1}{2}\sum_m g^{im} \left(\partial_l g_{mk} + \partial_k g_{ml} - \partial_m g_{kl} \right)
\end{align}
It is the Levi-Civita connection of $g$.

It follows for the connection 1-form: 
\begin{align}\label{con2}
	\omega_i = \langle E_1, \nabla_i E_2\rangle = \langle f_*e_1, f_* (\nabla_i e_2)\rangle = g(e_1, \nabla_i e_2)
\end{align}
We now have two formulae for the connection 1-form: an extrinsic formula (\ref{con1}) via the flat ambient space $\R^3$ and an intrinsic formula (\ref{con2}) via the metric $g$ and its Levi-Civita connection.
Differentiating both formuale leads to various expressions for the curvature form $d\omega$.

Extrinsic:
\begin{align}
	d\omega_{ij} &= \partial_i \omega_j  - \partial_j \omega_i \\
		&= \langle \partial_i E_1, \partial_j E_2 \rangle 
			+ \langle E_1, \partial_i \partial_j E_2 \rangle 
			- \langle \partial_j E_1, \partial_i E_2 \rangle 
			- \langle E_1, \partial_j\partial_i E_2 \rangle \\
		&= \langle \partial_i E_1, \partial_j E_2 \rangle - \langle \partial_j E_1, \partial_i E_2 \rangle \\
		&= II(\partial_i, E_1) II(\partial_j, E_2) - II(\partial_j, E_1) II(\partial_i, E_2)
\end{align}
To better recognize the above term note that we can for one fixed point $p \in U$, without loss of generality, choose the coordinates so that the frame $(\partial_1|_{f(p)}, \partial_2|_{f(p)})$ is equal to $(E_1|_{f(p)}, E_2|_{f(p)})$.
In these coordinates, the metric matrix becomes trivial (at this one point $p$) and we get:
\begin{align}
	d\omega_{12} = \det II = \det(g)^{-1}\cdot \det(S) = \det(S) = K
\end{align}
where $S$ is the shape operator and $K$ the Gauss-curvature.
As $(E_1, E_2)$ is an orthonormal frame, it follows that the area element of $f(U)$ is $dA = E_1^\flat \wedge E_2\flat$ and at the point $p$: $dA = \partial_1^\flat \wedge \partial_2^\flat = dx^1 \wedge dx^2$. In total:
\begin{align}
	d\omega = \frac{1}{2} \sum_{i,j=1}^2 d\omega_{ij} dx^i \wedge dx^j = d\omega_{12} dx^1 \wedge dx^2 = K \cdot dA
\end{align}

Intrinsic:

First note that for any $i,j\in\{1,2\}$, $\nabla_i e_1$ is a multiple of $e_2$ and $\nabla_j e_2$ a multiple of $e_1$, so: 
\begin{align}
	g(\nabla_i e_1, \nabla_j e_2) = 0
\end{align}
This is not the case extrinsically, i.e. if one replaces $\nabla$ with $\partial$, $g$ with $\langle.,.\rangle$ etc.
The property that does not hold intrinsically is the commutativity of derivatives.
Indeed, we obtain:
\begin{align}
	d\omega_{ij} &= \partial_i \omega_j  - \partial_j \omega_i \\
	&= g( \nabla_i e_1, \nabla_j e_2 )
	+ g( e_1, \nabla_i \nabla_j e_2 )
	- g( \nabla_j e_1, \nabla_i e_2 )
	- g( e_1, \nabla_j\nabla_i e_2 ) \\
	&= g(e_1, \nabla_i \nabla_j e_2 - \nabla_j \nabla_i e_2) \\
	&= R(\partial_i, \partial_j, e_2, e_1) \\
	&= -R(\partial_i, \partial_j, e_1, e_2)
\end{align}
We can again choose good coordinates at one point $p \in U$ so that $\partial_i|_p = e_i|_p$.
It follows:
\begin{align}
	d\omega_{12} &= R(\partial_1, \partial_2, \partial_2, \partial_1) \\
		&= Ric(\partial_2, \partial_2) \\
		&= \frac{1}{2} Scal
\end{align}
where $Ric$ is the Ricci-curvature (the trace of $R$ along the first and the fourth index) and $Scal$ the scalar curvature (the trace of the Ricci-curvature).
Finally:
\begin{align}
	d\omega = \frac{1}{2}Scal \cdot dA
\end{align}

\section{Eigenvectors and -values of 2-by-2 symmetric matrices} \label{app:eig}
\begin{proposition}
	Let $A \in Sym(2, \R)$ be the 2-by-2 real-valued symmetric matrix with cartesian coordinate $(x,y,z)$ and cylindrical coordinates $(r, \phi, z)$, i.e.
	\begin{align}
		A = \begin{pmatrix} x+z & y \\ y & -x+z \end{pmatrix} = \begin{pmatrix} r\cos(\phi)+z & r\sin(\phi) \\ r\sin(\phi) & -r\cos(\phi)+z \end{pmatrix}
	\end{align}
	Then 
	\begin{align}
		E_1 = \pm \begin{pmatrix} \cos\left(\frac{\phi}{2}\right) \\ \sin\left(\frac{\phi}{2}\right) \end{pmatrix}, \qquad E_2 = \pm \begin{pmatrix} -\sin\left(\frac{\phi}{2}\right) \\ \cos\left(\frac{\phi}{2}\right) \end{pmatrix}
	\end{align}
	are unit-length eigenvectors of $A$ with eigenvalues
	\begin{align}
		\lambda_1 = z+r, \qquad \lambda_2 = z-r,
	\end{align}
    respectively.
\end{proposition}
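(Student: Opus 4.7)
The plan is a direct verification by matrix-vector multiplication using the angle-subtraction identities for sine and cosine. Since the four candidate eigenvectors listed come in $\pm$ pairs and signs propagate trivially through both sides of the eigenvalue equation $AE = \lambda E$, it suffices to check the ``plus'' representatives $E_1 = (\cos(\phi/2), \sin(\phi/2))^\top$ and $E_2 = (-\sin(\phi/2), \cos(\phi/2))^\top$. Unit length is immediate from $\cos^2(\phi/2) + \sin^2(\phi/2) = 1$, and orthogonality $\langle E_1, E_2\rangle = 0$ follows equally trivially.

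The cleanest way to organize the computation is to split off the trace: write
\begin{equation*}
A = zI + r M_\phi, \qquad M_\phi = \begin{pmatrix} \cos\phi & \sin\phi \\ \sin\phi & -\cos\phi \end{pmatrix}.
\end{equation*}
The $zI$ term contributes $z$ to every eigenvalue tautologically, so the substantive content reduces to showing $M_\phi E_1 = +E_1$ and $M_\phi E_2 = -E_2$; combining these with the $zI$ contribution then yields the eigenvalues $z+r$ and $z-r$, respectively.

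For $M_\phi E_1$, the first component is $\cos\phi\cos(\phi/2) + \sin\phi\sin(\phi/2)$, which collapses to $\cos(\phi - \phi/2) = \cos(\phi/2)$ by the cosine-difference identity; the second component is $\sin\phi\cos(\phi/2) - \cos\phi\sin(\phi/2) = \sin(\phi/2)$ by the sine-difference identity. Hence $M_\phi E_1 = E_1$. The verification $M_\phi E_2 = -E_2$ is entirely analogous, with the same two identities producing a factor of $-1$ because the roles of sine and cosine swap in $E_2$.

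There is no genuine obstacle; the argument is a three-line trigonometric identity check. The only minor bookkeeping point I would call out explicitly is the degenerate locus $r = 0$ (the singular line $L$ where $\phi$ is not defined): there $A = zI$, every unit vector is an eigenvector with eigenvalue $z = z \pm 0$, and the statement is vacuously consistent, so the formulas above give the correct eigenvalues for any convention-dependent choice of $\phi$ at the origin.
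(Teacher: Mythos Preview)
Your proof is correct and, like the paper's, is a direct trigonometric verification; the only organizational difference is that the paper computes the conjugation $R\,\mathrm{diag}(z+r,\,z-r)\,R^{-1}$ (with $R$ the rotation by $\phi/2$) and recovers $A$ via the double-angle formulas, whereas you split off the trace and apply $M_\phi$ to each $E_i$ using the angle-difference formulas. The two computations are dual and equally short; your explicit remark on the degenerate case $r=0$ is a small bonus the paper leaves implicit.
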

\begin{proof}
	Compute the change of coordinates from (the candidates for) the eigenbasis $(E_1, E_2)$ to the standard basis:
	\begin{align}
		& \begin{pmatrix} \cos\left(\frac{\phi}{2}\right) & -\sin\left(\frac{\phi}{2}\right) \\ \sin\left(\frac{\phi}{2}\right) & \cos\left(\frac{\phi}{2}\right)\end{pmatrix}
		\begin{pmatrix} z+r & 0 \\ 0 & z-r \end{pmatrix}
		\begin{pmatrix} \cos\left(\frac{\phi}{2}\right) & -\sin\left(\frac{\phi}{2}\right) \\ \sin\left(\frac{\phi}{2}\right) & \cos\left(\frac{\phi}{2}\right)\end{pmatrix}^{-1} \\
		&= \begin{pmatrix} \cos\left(\frac{\phi}{2}\right) & -\sin\left(\frac{\phi}{2}\right) \\ \sin\left(\frac{\phi}{2}\right) & \cos\left(\frac{\phi}{2}\right)\end{pmatrix}
		\begin{pmatrix} z+r & 0 \\ 0 & z-r \end{pmatrix}
		\begin{pmatrix} \cos\left(\frac{\phi}{2}\right) & \sin\left(\frac{\phi}{2}\right) \\ -\sin\left(\frac{\phi}{2}\right) & \cos\left(\frac{\phi}{2}\right)\end{pmatrix} \\
		&= \begin{pmatrix} \cos\left(\frac{\phi}{2}\right) & -\sin\left(\frac{\phi}{2}\right) \\ \sin\left(\frac{\phi}{2}\right) & \cos\left(\frac{\phi}{2}\right)\end{pmatrix}
		\begin{pmatrix} (z+r) \cos\left(\frac{\phi}{2}\right) & (z+r) \sin\left(\frac{\phi}{2}\right) \\ -(z-r)\sin\left(\frac{\phi}{2}\right) & (z-r) \cos\left(\frac{\phi}{2}\right)\end{pmatrix} \\
		&= \begin{pmatrix} (z+r) \cos^2\left(\frac{\phi}{2}\right) + (z-r)\sin^2\left(\frac{\phi}{2}\right) & 2r \sin\left(\frac{\phi}{2}\right) \cos\left(\frac{\phi}{2}\right) \\ 2r \sin\left(\frac{\phi}{2}\right) \cos\left(\frac{\phi}{2}\right) & (z+r) \sin^2\left(\frac{\phi}{2}\right) + (z-r)\cos^2\left(\frac{\phi}{2}\right)\end{pmatrix} \\
		&= \begin{pmatrix} z+r\cos(\phi) & r\sin(\phi) \\ r\sin(\phi) & z -r\cos(\phi) \end{pmatrix}
	\end{align}
\end{proof}

\bibliography{references.bib}
\nocite{*}
\end{document}